\def\R{{\mathbb R}}
\def\Z{{\mathbb Z}}
\def\C{{\mathbb C}}
\newcommand{\itg}{\int \limits}
\newcommand{\ee}{{\rm e}}
\def\supp{{\rm supp\,}}
\def\im{{\rm Im\,}}
\def\re{{\rm Re\,}}
\DeclareMathOperator{\erfc}{erfc}
\def\a{{\alpha}}
\def\e{\varepsilon}
\def\D{\Delta}
\def\t{\tau}
\def\h{\eta}
\def\d{\delta}
\def\O{\Omega}
\def\k{\kappa}
\def\c{\chi}
\newcommand{\cD}{\mathcal D}
\newcommand{\cO}{\mathcal O}
\newcommand{\cP}{\mathcal P}
\newcommand{\cQ}{\mathcal Q}
\newcommand{\cS}{\mathcal S}
\newcommand{\cE}{\mathcal E}
\def\bx{{\mathbf x}}
\def\by{{\mathbf y}}
\def\bm{{\mathbf m}}
\def\bk{{\mathbf k}}
\def\bP{{\mathbf P}}
\def\bQ{{\mathbf Q}}
\newtheorem{thm}{Theorem}[section]
\newcommand{\keywords}{{\bf Keywords.  }}
\newcommand{\subjclass}{{\bf  Mathematics Subject Classification (2000). }}
\title{Accurate computation of the  high dimensional diffraction potential over
  hyper-rectangles}
\author{ Flavia Lanzara 
\thanks{Department of Mathematics, 
Sapienza University of Rome, 
Piazzale Aldo Moro 2, 00185 Rome, Italy. 
{\it email:} \texttt{\rm lanzara\symbol{'100}mat.uniroma1.it}}
\and Vladimir Maz'ya \thanks{Department of Mathematics, University of
Link\"oping,  581 83 Link\"oping, Sweden.} \thanks{
Department of Mathematical Sciences, University of
Liverpool, Liverpool L69 3BX, UK.
 {\it email:} \texttt{\rm vlmaz\symbol{'100}mai.liu.se }}
\and
Gunther Schmidt
\thanks{
Lichtenberger Str. 12 10178 Berlin, Germany.
{\it email:}schmidt.gunther@online.de}}
\date{}
\begin{document}
\maketitle

\begin{abstract}
  We propose a fast method for high order approximation of  potentials of the Helm\-holtz type operator  \(\Delta+\kappa^2\) over hyper-rectangles in \(\R^n\). By using the basis functions introduced in the theory of approximate approximations, the cubature of a potential is reduced to the quadrature of
  one-dimensional integrals with separable integrands.  Then a separated representation of the density, combined with a suitable quadrature rule, leads to a tensor product representation of the integral operator.  Numerical tests show that these formulas are accurate and provide approximations of order \(6\) up to dimension \(100\) and  \(\kappa^2=100\).

\end{abstract}

\keywords Helmholtz potential; separated representations; higher dimensions.

\subjclass{Primary 65D32; Secondary 65-05}

\pagestyle{empty}

\section{Introduction}
 
We consider the multidimensional diffraction potential
 \begin{equation}\label{volhelm}
 \cS_\k g(\bx)=\itg_{\R^n} \cE_\k (\bx-\by) g(\by)\,d\by \, ,
 \end{equation}
where \(g\) is compactly supported, the wave number \(\k\) is positive  and 
\[
\cE_\k(\bx)= \frac{i}{4} \left(\frac{\k}{2\pi |\bx|}\right)^{n/2-1}H_{n/2-1}^{(1)}(\k|\bx|)
\] 
is the fundamental solution of the Helmholtz type operator \(\Delta+\k^2\). Here  $H_n^{(1)}$ is the $n$-th order Hankel function of the first kind (cf. \cite{AS}). The function $u=\cS_\k g$ is the solution of the Helmholtz equation
\begin{equation*}\label{helmholtz}
\D u+\k^2 u=-g(\bx)
\end{equation*}
satisfying Sommerfeld's radiation condition
\[
\lim_{|\bx|\to\infty}|\bx|^{(n-1)/2} \left(\big< \frac{\bx}{|\bx|}, \nabla u(\bx)\big>-i\, \k u(\bx)\right)=0\,.
\]

The diffraction potential appears frequently in problems of acoustics, electromagnetics and optics. Beside the singularity of the kernel \(\cE_\k\) the approximation of this integral operator is challenging because of the fast oscillations of \(\cE_\k\) for high wave number \(\k\). In this paper we propose a method for high order approximations of \eqref{volhelm}  which is fast and accurate.
We use the concept of approximate approximations introduced by V. Maz'ya in \cite{M1,M2} (see also \cite{MSbook}) and the idea of tensor-structured approximation, first addressed in \cite{BMoh2,BMoh1}. 

Recently  modern methods based on tensor product approximations have been applied to some classes of multidimensional integral operators. They are  based on the approximation of the  kernel by a linear combination of exponentials or Gaussians, which lead to a tensor product approximation. 
For example, in \cite{kho} a separable approximation for the Helmholtz kernel in dimension \(n=3\) was considered. We use a different method, which does not approximate or modify the kernel of the integral operator and provides new very efficient semi-analytic cubature formulas. This approach was already applied to elliptic and parabolic problems.
The construction of approximation formulas for the potential of the operator  $-\Delta +2{\bf b}\cdot \nabla+c$ with ${\bf b} \in\C^n$ and $c\in\C$ over the full space under the condition $\re (c+|{\bf b}|^2)\geq 0$ was considered  in \cite{LMS2}. The harmonic potential over half-spaces was studied  in \cite{LMS3}. Cubature formulas for operators $-\D+c$  over hyper-rectangules in $\R^n$, under the condition  \(\re c\geq 0\), were studied in  \cite{LMS4}.
 The method was extended in \cite{LMS5} to the case \(\re c<0\)  if there exists \(\theta\in\C\) with \(\re \theta > 0\) 
 and $\re\,(\theta \, c)\geq 0$.  
For the diffraction potential with \(c=-\kappa^2<0\) some formulas from
 \cite{LMS5} could result in  numerical overflow problems.
 In this paper we modify  these formulas and 
 show that also the diffraction potential can be treated with our approach.
Note also that
 this approach was applied to parabolic problems in \cite{LMS6} and to higher order operators in \cite{LMS18} and \cite{LMS18bis}.   

The method {\it approximate approximations} uses quasi-interpolation formulas of the type
\begin{equation}\label{quasiint}
g_{h,\cD}(\bx)=\cD^{-n/2} \sum_{\bm \in \Z^n} g(h \bm) \h\left(\frac{\bx-h\bm}{h\sqrt{\cD}}\right).
\end{equation}
Here $h$ and $\cD$ are positive parameters and $\h$ is a smooth and rapidly decaying function. 
Under the assumption that
\begin{equation}\label{moment}
\itg_{\R^n} \bx^\a \h(\bx)d\bx=\d_{0,\a},\qquad 0\leq |\a|<N\, ,
\end{equation}
it was proved in \cite[p.21]{MSbook} that, for any saturation error $\e>0$, one can fix the parameter $\cD>0$ such that
\begin{equation}\label{est}
|g(\bx)-g_{h,\cD}(\bx)|=\cO((h\sqrt{\cD})^N+\e)||g||_{W_\infty^N}\,.
\end{equation}
Using \eqref{quasiint}, we can approximate \(\cS_\k g\) by the sum
\begin{equation}\label{sum}
\cS_\k g_{h,\cD}(\bx)=\cD^{-n/2} \sum_{\bm\in\Z^n} g(h\bm) \Phi(\bx-h\bm)
\end{equation}
with
\[
\Phi(\bx)=(\cS_\k\h(\frac{|\cdot|}{h\sqrt{\cD}}))(\bx)=\itg_{\R^n} \cE_\k\left( \bx-\by\right)\h(\frac{\by}{h\sqrt{\cD}})\,d\by\,,
\]
which gives rise to cubature formulas  with the approximation behavior \eqref{est}. This follows from the boundedness of \(\cS_\k:C(\Omega)\to C(\Omega)\) for any bounded domain \(\Omega\). It remains to choose \(\h\) such that \(\cS_{\k}\h\) can be computed analytically or, at least, efficiently.

As for many other important integral operators of mathematical physics the Gaussian $\ee^{-|\bx|^2}$ and related functions play an important role as generating functions for cubature formulas. For example the function $\pi^{-n/2}\ee^{-|\bx|^2}$ satisfies the moment condition of order $2$ and provides cubature formulas of second order. We construct cubature formulas of order $2M$  by using generating  functions based upon the exponential.
 
The outline of the paper is the following. In Section \ref{sec2} we describe the algorithm for diffraction potentials over the whole space. In Section \ref{sec3}, we derive the one-dimensional integral representations of the diffraction potential over boxes for our basis functions. In Section \ref{sec4}, for densities with separated representations, we describe tensor product approximations of the integral operator and we provide results of numerical experiments showing that these approximations are accurate and preserve the predicted convergence order.

\section{The diffraction potential  over \(\R^n\)}\label{sec2}

Consider the basis functions
\begin{equation*}\label{radialbasis}
\h_{2M}(\bx)=\pi^{-n/2} L_{M-1}^{(n/2)}(|\bx|^2)\ee^{-|\bx|^2}
\end{equation*}
which satisfy the moment conditions \eqref{moment} (cf. \cite[p.55]{MSbook}). Here \(L_j^{(\gamma)}\) are the generalized Laguerre polynomials.
The diffraction potential of \(\h_{2M}\) has the representation
(cf. \cite{Lipp} and \cite[pp. 94-97]{MSbook})
\begin{align*}
\cS_\k \h_{2M}(\bx)=\frac{\cS_\k(\ee^{-|\cdot|^2})(\bx)}{\pi^{n/2}}\sum_{j=0}^{M-1} \frac{\k^{2j}}{4^j j!}
+\frac{\ee^{-|\bx|^2}}{\pi^{n/2}\k^2} \sum_{j=0}^{M-1} \sum_{m=0}^{j-1}
\frac{\k^{2(j-m)}m!}{2^{2(j-m)}j!}L_m^{(n/2-1)}(|\bx|^2)
\end{align*}
with
\[
 \cS_\k(\ee^{-|\cdot|^2})(\bx)=\frac{\pi i \, \ee^{-|\bx|^2}}{2} \left(\frac{\k}{2|\bx|}\right)^{n/2-1}\int_0^\infty H^{(1)}_{n/2-1}(\k r) I_{n/2-1}(2|\bx|r)\ee^{-r^2}r\, dr\,.
\]
 \(I_{n/2-1}\) denotes the modified Bessel functions of the first kind. The function \(\cS_\k \h_{2M}\) is the basis for approximate cubature formulas of order \(O(h^{2M})\). For example, for \(n=3\), we have
\[
\cE_\k(\bx)=\frac{\ee^{i \k |\bx|}}{4\pi|\bx|}
\]
and 
\[
 \cS_\k(\ee^{-|\cdot|^2})(\bx)=\int_{\R^3}\frac{\ee^{i \k |\bx-\by|}}{4\pi|\bx-\by|}\ee^{-|\by|^2}d\by=\frac{\sqrt{\pi}}{2}\frac{\ee^{-|\bx|^2}}{4|\bx|}\left(W(\frac \k 2-i|\bx|)-W(\frac \k 2+i|\bx|)\right)\,.
 \]
The function \(W(z)\) denotes the scaled complementary error function (cf. \cite[7.1.3]{AS})
\begin{equation}\label{fadd1}
W(z)=\ee^{-z^2} \erfc(-iz)=\ee^{-z^2}\frac{2}{\sqrt{\pi}} \int_{-iz}^{\infty}=\frac{2}{\sqrt{\pi}}\int_0^\infty \ee^{-t^2} \ee^{2izt}dt
\end{equation}
which is also known as Faddeeva function. Then the problem is reduced to the efficient computation of certain special functions.

Another approximation of order \(2M\) can be derived with basis function in tensor product form
\begin{equation*}
\begin{split}\label{basis}
&\widetilde\h_{2M}(\bx)=\prod_{j=1}^n{\h}_{2M}(x_j);
\quad
{\h}_{2M}(x)=\frac{(-1)^{M-1}}{2^{2M-1}\sqrt{\pi} (M-1)!}\frac{H_{2M-1}(x) \ee^{-x^2}}{x}\,,
\end{split}
\end{equation*}
where $H_k$ are the Hermite polynomials
\begin{equation*}\label{hermite}
H_k(x)=(-1)^k \ee^{x^2} \left( \frac{d}{dx}\right)^k \ee^{-x^2}.
\end{equation*}
\(\widetilde{\h}_{2M}\) satisfies the moment conditions of order  $2M$  (cf. \cite[p.52]{MSbook}) and the quasi-interpolant \eqref{quasiint}
provides an approximation of $g$ with the error estimate \eqref{est}.

Hence the sum \eqref{sum} with
\[
\Phi_M(\bx)=\itg_{\R^n} \cE_\k\left( \bx-\by\right)\prod_{j=1}^n\h_{2M}(\frac{y_j}{h\sqrt{\cD}})\,d\by
\]
provides a simple cubature formula for the diffraction potential \(\cS_\kappa g\).
\begin{thm}\label{thm1}
If $n\geq 3$, the solution of the equation 
\begin{equation}\label{helm3}
-(\Delta+\k^2)u=\prod_{j=1}^n \h_{2M}(a_j x_j),\qquad a_j>0
\end{equation}
can be expressed by the following one-dimensional integral
\begin{equation}\label{Sk1}
u(\bx)=i \pi^{-n/2} \itg_0^\infty \ee^{i\k^2 t}  \prod_{j=1}^n \cP_M(a_j x_j,4i a_j^2 t) \ee^{-(a_j x_j)^2/(1+4it a_j^2)}dt
\end{equation}
with
\begin{equation}\label{PM}
\cP_M(x,t)=
\sum_{s=0}^{M-1} \frac{(-1)^s}{s! 4^s } \frac{1}{(1+  t)^{s+1/2}} H_{2s}\left(\frac{ x}{\sqrt{1+  t}}\right).
\end{equation}
\end{thm}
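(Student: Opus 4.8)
The plan is to identify $u$ with the diffraction potential $\cS_\k g$ of the Schwartz density $g(\by)=\prod_{j=1}^n\h_{2M}(a_jy_j)$, to use the heat‑semigroup representation of the outgoing resolvent, and then to exploit the tensor structure. Writing $\Gamma_\tau(\bx)=(4\pi\tau)^{-n/2}\ee^{-|\bx|^2/(4\tau)}$ for the complexified Gaussian kernel (so that $\ee^{it\D}$ has kernel $\Gamma_{it}(\bx-\by)$), the limiting‑absorption principle gives
\begin{equation*}
\cS_\k g(\bx)=\lim_{\e\to0^+}\big((-\D-\k^2-i\e)^{-1}g\big)(\bx)=i\itg_0^\infty\ee^{i\k^2 t}\,(\Gamma_{it}*g)(\bx)\,dt ,
\end{equation*}
where the last equality uses the elementary identity $(-\D-z)^{-1}=i\int_0^\infty\ee^{itz}\ee^{it\D}\,dt$ for $\im z>0$ and the fact that the limit $\e\to0^+$ may be taken under the integral: this is where $n\geq3$ is used, since it makes the integrand $O(t^{-n/2})$ as $t\to\infty$ uniformly in $\e$ (as $t\to0$ it tends to $g(\bx)$). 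Because $\Gamma_{it}$ and $g$ are products of one‑dimensional factors, the convolution factors as $(\Gamma_{it}*g)(\bx)=\prod_{j=1}^n\big(\g_{it}*\h_{2M}(a_j\cdot)\big)(x_j)$ with $\g_\tau(x)=(4\pi\tau)^{-1/2}\ee^{-x^2/(4\tau)}$, and everything reduces to the one‑dimensional evolution $\big(\g_{it}*\h_{2M}(a\cdot)\big)(x)$.

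To compute it, I would first put the basis function in Hermite form, $\h_{2M}(ax)=\pi^{-1/2}\sum_{s=0}^{M-1}\frac{(-1)^s}{s!\,4^s}H_{2s}(ax)\ee^{-a^2x^2}$, which is equivalent to the stated definition by the standard Laguerre--Hermite identity (cf.\ \cite{MSbook}). Then, using the Rodrigues formula $H_k(ax)\ee^{-a^2x^2}=(-1)^k a^{-k}(d/dx)^k\ee^{-a^2x^2}$, the commutation of $\g_\tau*\,\cdot$ with $d/dx$, and the elementary Gaussian heat evolution $\g_\tau*\ee^{-\alpha(\cdot)^2}=(1+4\alpha\tau)^{-1/2}\ee^{-\alpha x^2/(1+4\alpha\tau)}$ (extended analytically to $\tau=it$; there is no branch ambiguity since $\re(1+4ia^2t)=1>0$), one gets
\begin{equation*}
\big(\g_{it}*[H_{2s}(a\cdot)\ee^{-a^2(\cdot)^2}]\big)(x)=\frac{1}{(1+4ia^2t)^{s+1/2}}\,H_{2s}\Big(\frac{ax}{\sqrt{1+4ia^2t}}\Big)\,\ee^{-a^2x^2/(1+4ia^2t)}.
\end{equation*}
Summing over $s$ and comparing with \eqref{PM} yields $\big(\g_{it}*\h_{2M}(a\cdot)\big)(x)=\pi^{-1/2}\cP_M(ax,4ia^2t)\,\ee^{-a^2x^2/(1+4ia^2t)}$.

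Taking the product over $j=1,\dots,n$ and substituting into the formula for $u$ gives precisely \eqref{Sk1}; as a check, at $t=0$ the integrand equals $i\prod_j\h_{2M}(a_jx_j)$, consistent with $-(\D+\k^2)u=\prod_j\h_{2M}(a_jx_j)$. The one‑dimensional Gaussian/Hermite computations and the tensorization are routine; the only delicate point is the validity of the heat‑semigroup representation together with the passage to the limit under the integral, which is exactly what forces $n\geq3$ (for $n=1,2$ the $t$‑integral is only conditionally convergent and needs separate treatment). A more computational alternative avoids the resolvent identity entirely: since each factor $\cP_M(a_jx_j,4ia_j^2t)\,\ee^{-(a_jx_j)^2/(1+4ia_j^2t)}$ solves the one‑dimensional Schr\"odinger equation $\de_t v=i\de_{x_j}^2v$, differentiating \eqref{Sk1} under the integral and integrating by parts in $t$ collapses the integral to its value at $t=0$, giving $-(\D+\k^2)u=\prod_j\h_{2M}(a_jx_j)$; the radiation condition then follows by recognizing $u$ as the convolution of $g$ with the outgoing fundamental solution $\cE_\k$.
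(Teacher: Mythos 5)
Your proof is correct and follows essentially the same route as the paper: both rest on the representation $u=i\int_0^\infty \ee^{i\k^2 t}\,\ee^{it\Delta}g\,dt$ (the paper obtains it by integrating the Cauchy problem $\partial_t w-i\Delta w-i\k^2 w=0$ in $t$, you via the resolvent identity and limiting absorption, with the same use of $n\geq 3$ for the absolute convergence of the $t$-integral via the $t^{-n/2}$ decay), followed by tensorization of the free Schr\"odinger evolution of the Gaussian--Hermite data. The only substantive difference is that you derive the one-dimensional formula for $\ee^{it\partial_x^2}\big[\h_{2M}(a\cdot)\big]$ explicitly from the Rodrigues formula and the Gaussian convolution, whereas the paper simply cites equation (22) of \cite{LMS17}; your computation is correct and makes the argument self-contained.
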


\begin{proof}
We multiply \eqref{helm3} by $i$ and look for the solution of
\[
-i \D_\bx\,v -i \,\k^2\,v=i \prod_{j=1}^n \h_{2M}({a_j} x_j),\qquad \bx\in\R^n.
\]
We can obtain $v$ by solving the following Cauchy problem for the parabolic equation in $\R^n$
\begin{equation}\label{eqw}
\begin{split}
&\partial_t \,w-i\, \D_\bx\, w-i\, \k^2\, w=0,\qquad t\geq 0\\
&w(\bx,0)=i\,\prod_{j=1}^n \h_{2M}({a_j} x_j).
\end{split}
\end{equation}
Integrating in $t\in[0,T]$, $T>0$,  we derive 
\[
w(\bx,T)-w(\bx,0)-i(\Delta_\bx+ \k^2) \itg_0^T w(\bx,t)dt=0.
\]
Hence, when $T\to \infty$, we get
\begin{equation}\label{v}
v(\bx)=\itg_0^\infty w(\bx,t)dt
\end{equation}
provided the improper integral exists. If $w$ solves  \eqref{eqw},  then $z=w \, \ee^{-i\k^2 t}$ is the solution of the initial value problem
\[
\partial_t z-i\D_\bx z=0, \qquad z(\bx,0) =i \,\prod_{j=1}^n \h_{2M}({a_j} x_j).
\]
From \cite[(22)]{LMS17} we obtain
\[
z(\bx,t)=i\, \pi^{-n/2}\prod_{j=1}^n \cP_M(a_j x_j,4i a_j^2 t) \ee^{-(a_j x_j)^2/(1+4it a_j^2)}\,.
\]
Since \(w=\ee^{-i\k^2 t} z\),  the assertion follows from \eqref{v}.
\end{proof}

Consequently, we derive  the following approximation formula for  \eqref{volhelm}
\begin{equation}\label{approxS}
\cS_\k g_{h\sqrt{\cD}} (\bx)=\frac{i}{\pi^{n/2} \cD^{n/2}}
\sum_{\bm\in\Z^n}  g(h \bm)
\itg_0^\infty \ee^{i\k^2 t}  \prod_{j=1}^n \cP_M\left(\frac{x_j-hm_j}{h\sqrt{\cD}},\frac{4it}{h^2\cD}\right) \ee^{-\frac{(x_j-hm_j)^2}{h^2{\cD}+4it}}dt.
\end{equation}

The representation \eqref{Sk1} has the advantage that the integrand consists of elementary functions and has separated representation i.e. is the sum of products of one-dimensional functions. This  is useful in multidimensional computations. Indeed, suppose that $g(\bx)$ allows a separated representation that is, within a given accuracy $\e$, it can be represented as sum of products of univariate functions
\begin{equation}\label{sep}
\begin{split}
{g}(\bx)=
\sum_{p=1}^P \alpha_p \prod_{j=1}^n g^{(p)}_j (x_j)+\cO(\e)\, .
\end{split}
\end{equation}
Then a separated representation of \eqref{approxS} can be obtained by applying a quadrature rule. We deduce an approximation formula which is fast also in high dimensional case because only one-dimensional operations are used. This will be treated in detail in section \ref{sec4}. 

It is possible to prove that the cubature error of the described formulas converge to zero as \(h\to 0\).
\begin{thm} \label{thm1.2} (\cite[Theorem 2]{Lipp})
Assume that $g \in C^N(\R^n)$, \(N=2M\), has compact support and
let the mesh width satisfy $h\k  \le C < 2 \pi$ with $\k$
the wavenumber. Then for any $\varepsilon > 0$ there exists  $\cD > 0$ such that
the cubature formulas generated by the functions 
$\eta_{2M}$ or $\widetilde \eta_{2M}$
provide the error estimate 
\[
 |\cS_\k g_{h,\cD}(\bx) - \cS_\k g(\bx)| \le c \, (\sqrt{{\cD}}h)^N
    \sum _{|\alpha|  = N} 
  \frac{\|\partial ^{\alpha} g \|_{L_\infty}}
{\alpha !} +  h^2 \varepsilon \|g\|_{W_\infty^{N-1}} \, .
\]

\end{thm}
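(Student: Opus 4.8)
The plan is to combine linearity with the fact that $\cS_\k$ is the outgoing solution operator of $-(\Delta+\k^2)$. Since the cubature sum in \eqref{sum} equals $\cS_\k g_{h,\cD}$, we have $\cS_\k g_{h,\cD}-\cS_\k g=\cS_\k(g_{h,\cD}-g)$, and it suffices to estimate $\cS_\k$ acting on the quasi-interpolation error $E_{h,\cD}:=g_{h,\cD}-g$. As $g$ is compactly supported and $\eta_{2M}$ (resp. $\widetilde\eta_{2M}$) decays faster than any power, $E_{h,\cD}$ and all its derivatives decay rapidly, so we may fix a bounded $\Omega$ containing $\supp g$ and the evaluation points and use the boundedness of $\cS_\k:C(\Omega)\to C(\Omega)$.

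First I would split $E_{h,\cD}$ by applying the Poisson summation formula to the lattice sum in \eqref{quasiint}, which reads on the Fourier side
\[
\widehat{g_{h,\cD}}(\xi)=\widehat{\eta_{2M}}(h\sqrt{\cD}\,\xi)\sum_{\nu\in\Z^n}\widehat{g}(\xi+2\pi\nu/h)
\]
(with a suitable normalisation of the transform). The $\nu=0$ term minus $\widehat g$ gives, using the moment conditions \eqref{moment} in the form $\widehat{\eta_{2M}}(0)=1$, $\partial^\alpha\widehat{\eta_{2M}}(0)=0$ for $1\le|\alpha|<N$, and Taylor's formula, a part $E_{\mathrm{main}}$ with $\|E_{\mathrm{main}}\|_{L_\infty}\le c\,(\sqrt{\cD}h)^N\sum_{|\alpha|=N}\|\partial^\alpha g\|_{L_\infty}/\alpha!$ — this is the classical approximate approximation estimate underlying \eqref{est}. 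Feeding this into $\cS_\k:C(\Omega)\to C(\Omega)$ produces the first term of the asserted bound.

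The second, new, term comes from the saturation part $E_{\mathrm{sat}}=E_{h,\cD}-E_{\mathrm{main}}$, a superposition over $\nu\ne0$ of high-frequency wave packets $\ee^{-2\pi i\langle\nu,\bx\rangle/h}\psi_\nu(\bx)$ whose carrier frequencies $2\pi|\nu|/h\ge 2\pi/h$ exceed $\k$ by the hypothesis $h\k\le C<2\pi$, whose envelopes $\psi_\nu$ vary on the scale $h\sqrt{\cD}$, and which — by the rapid decay of $\widehat{\eta_{2M}}$ — satisfy $\|\psi_\nu\|_{W_\infty^{N-1}}\le\theta_\nu\|g\|_{W_\infty^{N-1}}$ with $\sum_{\nu\ne0}\theta_\nu\le\varepsilon$ once $\cD$ is large enough. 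The point is that $\cS_\k$ gains a factor $h^2$ on each such packet. To see this I would write $\cS_\k(\ee^{-2\pi i\langle\nu,\cdot\rangle/h}\psi_\nu)(\bx)=\itg_{\R^n}\cE_\k(\bx-\by)\ee^{-2\pi i\langle\nu,\by\rangle/h}\psi_\nu(\by)\,d\by$ and split off a neighbourhood of $\by=\bx$: there $\cE_\k(\bx-\by)\sim|\bx-\by|^{-(n-2)}$, whose Fourier transform decays like $|\xi|^{-2}$, so integrating it against the carrier of frequency $2\pi|\nu|/h$ contributes $\sim(2\pi|\nu|/h)^{-2}\psi_\nu(\bx)=\cO(h^2)$; on the complement the integrand is smooth and the phase $\k|\bx-\by|-2\pi\langle\nu,\by\rangle/h$ is non-stationary in $\by$ (precisely because $2\pi|\nu|/h>\k$), so repeated integration by parts leaves a remainder of order $h^2$ or smaller, the derivatives reaching the envelope giving the factor $\|\psi_\nu\|_{W_\infty^{N-1}}$. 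Summing over $\nu\ne0$ yields $h^2\varepsilon\|g\|_{W_\infty^{N-1}}$. Both basis functions are handled the same way, since only \eqref{moment} and the rapid decay of their Fourier transforms enter.

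I expect the main obstacle to be the sharp wave-packet bound for $\cS_\k$ in the last step: extracting precisely the power $h^2$ from the kernel singularity, and controlling the non-singular part by non-stationary phase uniformly in $\nu$ while keeping track of how many envelope derivatives — hence derivatives of $g$ — are consumed, so that the Sobolev index is exactly $N-1$. The remaining ingredients, the Taylor expansion for $E_{\mathrm{main}}$ and the summation of the rapidly convergent series in $\nu$, are routine.
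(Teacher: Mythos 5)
The paper does not prove this theorem: it is quoted verbatim from \cite[Theorem 2]{Lipp}, so there is no internal proof to compare against. Measured against the argument in that reference, your proposal is essentially the standard one and its overall architecture is sound: linearity reduces everything to $\cS_\k(g_{h,\cD}-g)$, Poisson summation splits the quasi-interpolation error into the $\nu=0$ ``main'' part (controlled by the moment conditions \eqref{moment} and Taylor expansion, giving the $(\sqrt{\cD}h)^N$ term) and a saturation part carried on the lattice frequencies $2\pi\nu/h$, and the hypothesis $h\k\le C<2\pi$ is used exactly where it must be, to keep those carrier frequencies uniformly separated from the characteristic sphere $|\xi|=\k$. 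The one place where you take a more laborious route than necessary is the final $h^2$-gain: rather than splitting the kernel near the diagonal and invoking non-stationary phase, it is cleaner to treat $\cS_\k$ as the Fourier multiplier $(|\xi|^2-\k^2)^{-1}$, or equivalently to iterate the identity
\begin{equation*}
-(\Delta+\k^2)\bigl(\ee^{i\langle\lambda,\cdot\rangle}\psi\bigr)
=(|\lambda|^2-\k^2)\,\ee^{i\langle\lambda,\cdot\rangle}\psi
-\ee^{i\langle\lambda,\cdot\rangle}\bigl(2i\langle\lambda,\nabla\rangle+\Delta\bigr)\psi
\end{equation*}
twice with $\lambda=-2\pi\nu/h$: since $|\lambda|^2-\k^2\ge(4\pi^2-C^2)/h^2$ uniformly in $\nu\neq0$, each application of $(|\lambda|^2-\k^2)^{-1}$ costs at most one envelope derivative and gains a factor $h/|\nu|$, yielding the factor $h^2$, the summability in $\nu$, and the loss of at most two (hence at most $N-1$) derivatives of $g$ all at once. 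This removes the ``main obstacle'' you flag at the end; with that substitution your outline is a correct proof sketch of the cited theorem.
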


 \section{The diffraction potential over hyper -rectangles}\label{sec3}
 We consider integrals
  \begin{equation}\label{volhelm2}
 \cS_\k^{[\bP,\bQ]} g(\bx)=\itg_{[\bP,\bQ]} \cE_\k(\bx-\by) g(\by)\,d\by
 \end{equation}
 taken over rectangular domains  \([\bP,\bQ]=\prod_{j=1}^n[ P_j,Q_j] \).
Then \(\cS_\k^{[\bP,\bQ]} g\) provides a solution of the equation of \((\Delta+\k^2)u=-\chi_{[\bP,\bQ]}g\), together with an appropriate radiation condition.
The direct application of the method described in section \ref{sec2}, which is based on replacing the density by a quasi-interpolant \(g_{h,\cD}\), does not give good approximations because \(g_{h,\cD}\) does not approximate \(g\) near the boundary of \([\bP,\bQ]\). In order to overcome this difficulty,  we extend \(g\) outside \([\bP,\bQ]\) with preserved smoothness and  consider the quasi-interpolant of the extension \(\widetilde g\). Since $\widetilde\h_{2M}$ is smooth and of rapid decay, for any $\e>0$ one can fix $r>0$ and positive parameter $\cD>0$ such that the quasi-interpolant
\begin{equation*}\label{extfbis}
g_{h,\cD}^{(r)}(\bx)= \cD^{-n/2}
\sum_{h\bm\in\O_{rh}}  \widetilde g (h\bm)
\prod_{j=1}^n \eta_{2M} \Big( \frac{x_j - h m_j}{\sqrt{\cD} h}\Big) 
\end{equation*}
with $\O_{rh}=\prod_{j=1}^n I_j$, $I_j= (P_j-rh\sqrt{\cD},Q_j+r h \sqrt{\cD})$, approximates $g$ in $[\bP,\bQ]$ with the error estimate \eqref{est}.
We obtain the following approximation formula of high order for the volume potential \eqref{volhelm2}:
\begin{equation*}\label{cub3}
 \cS_\k^{[\bP,\bQ]}g_{h,\sqrt{\cD}}^{(r)}(\bx) =\cD^{-n/2} \sum_{h\bm\in\O_{rh}}  \widetilde g(h \bm)\Phi^{[\bP-h\bm,\bQ-h\bm]}(\bx-h\bm)\,
\end{equation*}
with
\[
\Phi_M^{[\bP,\bQ]}(\bx)=\itg_{[\bP,\bQ]} \cE_\k\left( \bx-\by\right)\prod_{j=1}^n\h_{2M}(\frac{y_j}{h\sqrt{\cD}})\,d\by\,.
\]
We can prove, similarly to theorem \ref{thm1}, that also \(\Phi_M^{[\bP,\bQ]}\) admits a one-dimensional integral representation where the integrand has separated representation.

\begin{thm}\label{thm2}
Let \(n\geq 3\). The solution of the equation 
\begin{equation}\label{advecM}
-(\D
+\k^2) \, u = \prod_{j=1}^n \c_{(P_j,Q_j)}(x_j) \, 
\eta_{2M}(a_j x_j)\,, \qquad a_j>0
\end{equation}
can be expressed by the one-dimensional integral
\begin{equation*}\label{rep2}
u(\bx)=i \itg_0^\infty \ee^{i \k^2 t}\prod_{j=1}^{n} (\Psi_M( a_j x_j,  4\,i\,a_j^2 t, a_j P_j)-\Psi_M( a_j x_j,4\,  i \, a_j^2 t,  a_j Q_j))dt\,,
\end{equation*}
where 
\begin{equation*}\label{Psi}
\Psi_M(x,t,y)=\frac{1}{2\sqrt{\pi}}
\ee^{- x^2/(1+t)}
 \Bigg(\erfc(F(x,t,y)) \cP_M(x,t) 
-  \frac{\ee^{-F^2(x,t,y)}}{\sqrt{\pi} } \cQ_M(x,t,y)
\Bigg)
\end{equation*}
with $\cP_M$ defined in \eqref{PM}, 
\[
F(x,t,p)=\sqrt{\frac{t+1}{t}} \left(p-\frac{x}{t+1}\right),\qquad
\cQ_1(x,t,y)=0,
\]
\begin{align}\nonumber
\cQ_M(x,t,y) &=2
\sum_{k=1}^{M-1} \frac{(-1)^k}   {k! \, 4^k}
  \sum_{\ell=1}^{2k}
\frac{(-1)^{\ell}}{t^{\ell/2}}
\bigg( H_{2k-\ell} (y)
  H_{\ell-1} \Big( \frac{y-x}{\sqrt{t}}\Big) \\\nonumber  
&\hskip60pt -
\Big(\hskip-4pt\begin{array}{c}2k\\\ell\end{array}\hskip-4pt\Big)
H_{2k-\ell}\Big(\frac{x}{\sqrt{1+t}}\Big)
\frac{H_{\ell-1}\big(F(t, x, y)\big)}{(1+t)^{k+1/2}}
\bigg)\,, \> M>1.
\end{align}
\end{thm}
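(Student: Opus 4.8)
The plan is to imitate the proof of Theorem \ref{thm1} but to keep track of the effect of the characteristic functions $\c_{(P_j,Q_j)}$, which will convert the full-space heat kernel into a difference of incomplete-heat-kernel terms built from the error function. First I would multiply \eqref{advecM} by $i$ and reduce, exactly as before, to the Cauchy problem $\partial_t w - i\D_\bx w - i\k^2 w = 0$ with initial data $w(\bx,0)=i\prod_j \c_{(P_j,Q_j)}(x_j)\eta_{2M}(a_jx_j)$, then pass to $z=w\ee^{-i\k^2 t}$, which solves the pure Schr\"odinger/heat equation $\partial_t z = i\D_\bx z$ with the same (now factorized) initial data. Because both the initial data and the heat semigroup factorize over the coordinates, it suffices to compute the one-dimensional evolution of $\c_{(P,Q)}(x)\eta_{2M}(ax)$ and then take the product; the representation \eqref{v} of $u$ as $\int_0^\infty w\,dt$ then gives the claimed formula with $\Psi_M(a_jx_j,4ia_j^2t,a_jP_j)-\Psi_M(a_jx_j,4ia_j^2t,a_jQ_j)$ once we identify the one-dimensional solution as $\Psi_M(ax,4ia^2t,aP)-\Psi_M(ax,4ia^2t,aQ)$. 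The extra $\pi^{-n/2}$ and the $a_j$-scalings are handled by the change of variables $x\mapsto a x$, $t\mapsto a^2 t$, just as in the passage from \eqref{Sk1} to its proof.

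The heart of the matter is therefore the one-dimensional computation: evaluating
$\int_{-\infty}^{\infty} K_t(x-\xi)\,\c_{(P,Q)}(\xi)\,\eta_{2M}(\xi)\,d\xi$
where $K_t$ is the (complex) Gaussian heat kernel, and recognizing the antiderivative in $\xi$ at the endpoints $P$ and $Q$ as $\Psi_M$. Here I would use that $\eta_{2M}(\xi)$ is, up to the normalizing constant, $H_{2M-1}(\xi)\ee^{-\xi^2}/\xi$, and — more conveniently — recall from \cite[(22)]{LMS17} (used already in Theorem \ref{thm1}) that the free evolution of $\eta_{2M}(a\,\cdot)$ is $\pi^{-1/2}\cP_M(ax,4ia^2t)\ee^{-(ax)^2/(1+4ia^2t)}$. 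The effect of truncating the initial data to $(P,Q)$ is to replace the Gaussian factor $\ee^{-(ax)^2/(1+4ia^2t)}$ by the same Gaussian times a difference of $\erfc$ values with argument $F$, and to produce the correction term $\cQ_M$: concretely, one writes $\c_{(P,Q)}=\mathbf 1_{(-\infty,Q)}-\mathbf 1_{(-\infty,P)}$, convolves each half-line indicator against $K_t(x-\xi)$ times the polynomial-times-Gaussian that represents $\eta_{2M}$, and integrates by parts (or differentiates a Gaussian integral under the sign) to move all $\xi$-powers onto Hermite polynomials evaluated at the endpoint; the boundary contribution is exactly $-\ee^{-F^2}\cQ_M/\sqrt\pi$ and the bulk contribution is exactly $\erfc(F)\cP_M$, with the prefactor $\tfrac1{2\sqrt\pi}\ee^{-x^2/(1+t)}$ collecting the common Gaussian.

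The main obstacle I anticipate is the bookkeeping in producing $\cQ_M$ in closed form: one must expand $\eta_{2M}$ via the explicit formula for $H_{2M-1}$, or better, use the generating-function/Hermite-addition identity that rewrites a Hermite polynomial of $\xi$ as a finite sum over products $H_{2k-\ell}(\text{endpoint})\,H_{\ell-1}\big((\text{endpoint}-x)/\sqrt t\big)$, track the binomial coefficients $\binom{2k}{\ell}$ and the powers $t^{-\ell/2}$, and check that the $\ell=0$ and odd-index contributions collapse correctly so that the sum starts at $\ell=1$ and $\cQ_1=0$. The rescaling step ($a_j$'s) and the assembly of the $n$-fold product are then routine, and convergence of $\int_0^\infty w\,dt$ at $T\to\infty$ is guaranteed by the same decay argument as in Theorem \ref{thm1} since $n\ge 3$. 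I would present the $n$-dimensional reduction and the $\erfc(F)\cP_M$ part in full, and state that the identification of the boundary term with $\cQ_M$ follows from the Hermite identities and an induction on $M$, relegating the explicit coefficient check to a short lemma or to \cite{LMS17}.
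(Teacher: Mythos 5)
Your proposal is correct and follows essentially the same route as the paper: reduce to the Schr\"odinger-type Cauchy problem $\partial_t v - i\Delta_\bx v=0$ with the truncated initial data, write $u=\int_0^\infty \ee^{i\k^2 t}v\,dt$, and invoke the one-dimensional explicit formula from \cite[theorem 4.1]{LMS17} for the factorized evolution over $(P_j,Q_j)$. The paper's proof is in fact shorter than yours — it cites that reference directly for the entire $\Psi_M$ identity rather than sketching the half-line/$\erfc$ computation and the Hermite bookkeeping for $\cQ_M$, which is precisely the part you also end up deferring to \cite{LMS17}.
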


\begin{proof}
We multiply \eqref{advecM} by $i$ and look for the solution of
\begin{equation*}\label{advecMe}
-i\D\,u-ik^2 \, u = i\prod_{j=1}^n \c_{(P_j,Q_j)}(x_j) 
\eta_{2M}(a_j x_j)\, .
\end{equation*}
As in theorem \ref{thm1}, we have
\[
u(\bx)=\itg_0^\infty \ee^{i k^2 t}v(\bx,t)dt\,,
\]
where \(v\) solves the Cauchy problem for the parabolic equation 
\[
\partial_t v-i\, \Delta_\bx v=0,\qquad v(\bx,0)=i \prod_{j=1}^n  \c_{(P_j,Q_j)}(x_j) \,
\eta_{2M}(a_j x_j)\, .
\]
The assertion follows from \cite[theorem 4.1]{LMS17}.
\end{proof}
At the grid points \(\{h\bk\}\) we obtain the cubature formula
\begin{equation*}
\cS_\k^{[\bP,\bQ]}g(h\bk)\approx \cD^{-n/2}\sum_{h\bm\in\O_{rh}}  \widetilde g(h \bm)b_{\bk,\bm}^{(M)}\,,
\end{equation*}
where we introduce the one-dimensional integrals
\[
b_{\bk,\bm}^{(M)}=
i \itg_0^\infty \ee^{i k^2 t}\prod_{j=1}^{n} (\Psi_M(\frac{k_j-m_j}{\sqrt{\cD}} ,  \frac{4\,i\, t}{h^2\cD}, \frac{P_j-hm_j}{h\sqrt{\cD}} )-\Psi_M( \frac{k_j-m_j}{\sqrt{\cD}}, \frac{4\,  i \, t}{h^2\cD},  \frac{Q_j-hm_j}{h\sqrt{\cD}} ))dt\,.
\]

\section{Separated representation and numerical results}\label{sec4}

The problem is reduced to find efficient quadrature formulas for the integrals  \(b_{\bk,\bm}^{(M)}\). It is well known that the trapezoidal rule for rapidly decaying integrands converges fast. If we make the substitutions
\begin{equation}\label{tm}
t=\ee^\xi,\quad \xi=a(\t+\ee^\t)\quad {\rm and} \quad \tau=b (u-\ee^{-u})
\end{equation}
with certain positive constants \(a, b\), as proposed in \cite{Mo}, then the integrals \(b_{\bk,\bm}^{(M)}\) transform to integrals over \(\R\) with integrands decaying doubly exponentially. We get after the substitutions \eqref{tm} 
\begin{multline*}
b_{\bk,\bm}^{(M)}=
i \itg_0^\infty \ee^{i k^2 \Phi(u)}\prod_{j=1}^{n} (\Psi_M(\frac{k_j-m_j}{\sqrt{\cD}} ,  \frac{4\,i\, \Phi(u)}{h^2\cD}, \frac{P_j-hm_j}{h\sqrt{\cD}} )\\
-\Psi_M( \frac{k_j-m_j}{\sqrt{\cD}}, \frac{4\,  i \, \Phi(u)}{h^2\cD},  \frac{Q_j-hm_j}{h\sqrt{\cD}} ))\Phi'(u)\,du\, ,
\end{multline*}
 where we set
 \(
 \Phi(u)=\exp(a b(u-\exp(-u))+a\,\exp(b(u-\exp(-u))))
 \).
 The quadrature with the trapezoidal rule with step size \(\tau\) gives a separated representation for \(b_{\bk,\bm}^{(M)}\).
 Assuming a separated representation \eqref{sep} of the density \(\widetilde g\) we derive the approximation of \eqref{volhelm} using one-dimensional operations
 \begin{multline}\label{finapp}
 \frac{i\t}{\cD^{n/2}}\sum_{p=1}^P \alpha_p \sum_{s=-N_0}^{N_1}\ee^{i \k^2 \Phi(s\t)} \Phi'(s\t) \prod_{j=1}^n \sum_{m_j}\widetilde g_j^{(p)}(hm_j)\times\\
  (\Psi_M(\frac{k_j-m_j}{h\sqrt{\cD}}, \frac{4\,i\, \Phi(s\t)}{h^2\cD}, \frac{P_j-hm_j}{h\sqrt{\cD}} )-\Psi_M( \frac{k_j-m_j}{h\sqrt{\cD}}, \frac{4\,  i \, \Phi(s\t)}{h^2\cD},  \frac{Q_j-hm_j}{h\sqrt{\cD}}))\,.
 \end{multline}
The problem is reduced to an efficient  implementation of \(\Psi_M\). We use the representation of the complementary error function \(\erfc (z)=\ee^{-z^2}W(iz)\) via the Faddeeva function \eqref{fadd1} and write
\begin{align*}
\Psi_M(x,t,y)
&=\frac{\ee^{-y^2}\ee^{i(y-x)^2/t}}{2\sqrt{\pi}}
\left( W\big(i F(x,it,y)\big)\cP_M(x,it) -\frac{\cQ_M(x,it,y)}{\sqrt{\pi} } 
\right)\,.
\end{align*}
Efficient implementations of  double precision computations of $W(z)$ 
 are available
if the imaginary part of the argument is nonnegative.
Otherwise, for $\im z < 0$,
overflow problems can occur,
which can be seen from the relation $W(z)=2\ee^{-z^2}-W(-z)$ (cf. \cite[7.1.11]{AS}).
But this helps to derive a stable formula also for $\im(i F(x,it,y))=\re F(x,it,y) < 0$,
since
\begin{align*}
\frac{\ee^{-x^2/(1+it)}\ee^{-F^2(x,it,y)}}{2} &W(iF(x,it,y))
 =\frac{\ee^{-x^2/(1+it)}\ee^{-F^2(x,it,y)}}{2}\Big(2 \ee^{F^2(x,it,y)}-W\big(-i F(x,it,y)\big)\Big)\\
&=\ee^{-x^2/(1+it)}-\frac{\ee^{-y^2}\ee^{i(y-x)^2/t}}{2}W\big(-i F(x,it,y)\big) \, .
\end{align*}
Thus we get the efficient formula
\begin{equation*}\label{PsiNew}
\begin{split}
&\Psi_M(x,t,y)= \, -\frac{\ee^{-y^2+i(y-x)^2/t}}{2\sqrt{\pi}}
\frac{\cQ_M(x,it,y)}{\sqrt{\pi}} +\\
&\left\{
\begin{aligned}
& \ee^{-y^2+i(y-x)^2/t} \, 
W\big(i F(x,it,y)\big)\frac{\cP_M(x,it)}{2\sqrt{\pi}}  , &\re F(x,it,y) \geq 0 ,\\
&\Big(2\,{\ee^{-x^2/(1+it)}}-{\ee^{-y^2+i(y-x)^2/t}} \, W\big(-i F(x,it,y)\big)\Big)\frac{\cP_M(x,it)}{2\sqrt{\pi}},
& \re F(x,it,y)<0 .
\end{aligned}
\right.
\end{split}
\end{equation*}
We verify numerically the accuracy of formula \eqref{finapp} and the convergence order of the method.
We assume in \eqref{volhelm}
\[
g(\bx)=-(\Delta+\k^2)\prod_{j=1}^n w(x_j),\quad \bx\in[-1,1]^n ,
\]
with \(\supp w\subset [-1,1]\) and \(w(\pm 1)=w'(\pm 1)=0\) which has the exact value \(\cS_\k^{[-{\bf 1},\bf{1}]} g=\prod_{j=1}^n w(x_j)\). 
In  the next tables we consider 
\[
w(x)=(x^2-1)^2 \ee^{x} \quad {\rm for} \quad  x\in[-1,1]; \quad w(x)=0\quad {\rm otherwise}\,,
\]
the extension \(\widetilde w(x)=w(x)\) (other extensions can also be considered, see e.g. \cite{LMS17}),   \(\cD=3\) and the parameters in the quadrature rule \(\tau=10^{-6}\),  \(a=6\), \(b=4\) in \eqref{tm}. 

In Table \ref{T1} we report on exact values and absolute errors for
\(\cS_\k^{[-{\bf 1},\bf{1}]} g\) at some points \((x,0,...,0)\), where
we have chosen the space dimensions \(n=3,10,100\) and the wave number
\(\k^2=1, 10,100\).  In \eqref{finapp} we choose \(h=0.025\) and
\(M=3\).  In Tables \ref{T2},\ref{T3} and \ref{T4} we report on the
absolute errors and the approximation rates for the diffraction
potential \(\cS_\k^{[-{\bf 1},\bf{1}]} g(0.2,...,0)\) in the space
dimensions \(n=3\) (Table \ref{T2}), \(n=10\) (Table \ref{T3}),
\(n=100\) (Table \ref{T4}) and the wave numbers \(\k^2=1,10,100\).
The approximate values are computed by the cubature formula
\eqref{finapp} for \(M=1,2,3\) and different values of the step size
\(h\).
The results show that in dimension \(n=3\), for large \(\kappa^2\),
the sixth order formula fails probably due to the slow decay of the
integrand and its rapid oscillations, whereas in dimensions \(n=10,
100\) formula \eqref{finapp} approximates \(\cS_\k^{[\bP,\bQ]}g\)
accurately, with the predicted approximation rate also for large
\(\kappa^2\).

\begin{table}
\begin{footnotesize}
\begin{center}
\begin{tabular}{r|c|c|c|c} \hline
\multicolumn{5}{c}{\(n=3\)}\\ \hline
$x$ & exact value & error  ({\(\k^2=1\)}) & error  ({\(\k^2=10\)}) &  error  ({\(\k^2=100\)}) \\[1pt] \hline       
 -0.4&  0.4729778245  & 0.1570150E-07 &  0.3615417E-04&0.1213741E-02 \\   
    0&1.0000000000  &0.1545729E-07& 0.3617374E-04 & 0.1213832E-02\\
   0.4& 1.0526315067 &  0.1430657E-07&0.3618822E-04&0.1213810E-02 \\
     0.8& 0.2884301043 & 0.1626331E-07&0.3618271E-04&0.1213701E-02 \\
         1.2& 0.0000000000 & 0.1347441E-07&0.3616111E-04 & 0.1213396E-02\\
\hline 
\end{tabular}\\[1mm]

\begin{tabular}{r|c|c|c|c} \hline
\multicolumn{5}{c}{\(n=10\)}\\ \hline
$x$ & exact value & error  ({\(\k^2=1\)}) & error  ({\(\k^2=10\)}) &  error  ({\(\k^2=100\)}) \\[1pt] \hline       
 -0.4&  0.4729778245 & 0.2220996E-07&0.1819134E-07&0.1238302E-06 \\   
    0&1.0000000000 & 0.4879996E-07&0.4217366E-07&0.2686369E-06  \\
   0.4&1.0526315067  &0.5801273E-07&0.5039037E-07 & 0.1988678E-06\\
     0.8&  0.2884301043  &0.3110463E-07&0.2394059E-07&0.1591014E-06  \\
         1.2& 0.0000000000  &0.4434357E-08&0.4529327E-08& 0.5568088E-07\\
\hline 
\end{tabular}\\[1mm]

\begin{tabular}{r|c|c|c|c} \hline
\multicolumn{5}{c}{\(n=100\)}\\ \hline
$x$ & exact value & error  ({\(\k^2=1\)}) & error  ({\(\k^2=10\)}) &  error  ({\(\k^2=100\)}) \\[1pt] \hline       
 -0.4& 0.4729778245 & 0.2423595E-06 &0.2423583E-06&0.2423444E-06 \\   
    0&1.0000000000&0.5133366E-06&0.5133336E-06& 0.5133042E-06 \\
   0.4&1.0526315067&  0.5480781E-06&0.5480750E-06&0.5480403E-06 \\
     0.8& 0.2884301043 & 0.1760679E-06&0.1760238E-06& 0.1753438E-06 \\
         1.2& 0.0000000000&  0.7227814E-09&0.7543741E-09 &  0.1230072E-08 \\
\hline 
\end{tabular}\\[1mm]

\caption{\small Exact value and absolute error
for $\cS_\k^{[{\bf 1},{\bf 1}]} g(x,0,...,0)$ using \eqref{finapp} with  \(\cD=3\), \(h=0.025\), \(M=3\).}\label{T1} 
\end{center}
\end{footnotesize}
\end{table}

\begin{table}
\begin{footnotesize}
\begin{center}
\begin{tabular}{r|cc|cc|cc} \hline
\(n=3\)&\multicolumn{6}{c}{\(\k^2=1\)}\\ \hline
 & \multicolumn{2}{c|}{\(M=1\)}& \multicolumn{2}{c|}{\(M=2\)}& \multicolumn{2}{c}{\(M=3\)}   \\\hline
$h^{-1}$ &  abs. error & rate  &abs. error & rate& abs. error & rate  \\[1pt] \hline       
 5&0.182E+01  &  & 0.198E+00   &   &0.752E-02  &\\
10&   0.403E+00  &    2.18      &   0.131E-01&   3.92   &0.112E-03&6.07\\
 20& 0.991E-01 &     2.02      & 0.814E-03 & 4.01    & 0.148E-05 &  6.24  \\
40 & 0.247E-01  &    2.00      &   0.506E-04 &4.01    & 0.147E-07  &    6.66 \\
80 & 0.617E-02&    2.00   &  0.319E-05 & 3.99   &0.276E-07 &   \\
\hline 
\end{tabular}\\[1mm]

\begin{tabular}{r|cc|cc|cc} \hline
&\multicolumn{6}{c}{\(\k^2=10\)}\\ \hline
 & \multicolumn{2}{c|}{\(M=1\)}& \multicolumn{2}{c|}{\(M=2\)}& \multicolumn{2}{c}{\(M=3\)}   \\\hline
$h^{-1}$ &  abs. error & rate  &abs. error & rate& abs. error & rate  \\[1pt] \hline       
  5&  0.127E+01 &  &0.461E+00  &   & 0.457E-01&\\
 10& 0.261E+00  & 2.28     & 0.293E-01 &  3.97    & 0.652E-03&    6.13\\
 20& 0.633E-01 &    2.05  &  0.179E-02 &  4.03
   & 0.131E-04&5.64   \\
40 &  0.157E-01 & 2.01   &  0.102E-03 & 4.13  & 0.466E-04&  \\
80 &   0.387E-02 &    2.02 &  0.383E-04 &  1.42 &  0.392E-04 &  \\
\hline 
\end{tabular}\\[1mm]
\begin{tabular}{r|cc|cc|cc} \hline
&\multicolumn{6}{c}{\(\k^2=100\)}\\ \hline
 & \multicolumn{2}{c|}{\(M=1\)}& \multicolumn{2}{c|}{\(M=2\)}& \multicolumn{2}{c}{\(M=3\)}   \\\hline
$h^{-1}$ &  abs. error & rate  &abs. error & rate& abs. error & rate  \\[1pt] \hline       
 5&  0.512E+00 &  & 0.291E+00   &   & 0.201E-01 &\\
10& 0.149E+00 &    1.78 &  0.173E-01  &     4.07   & 0.922E-03  & 4.45\\
 20& 0.399E-01 & 1.90   & 0.436E-03 &  5.31    &0.130E-02 &   \\
40 & 0.974E-02 &  2.04   &  0.119E-02  &   &0.121E-02   &   \\
80 &  0.391E-02 &   1.32 &  0.154E-02&   &0.155E-02  &    \\
\hline 
\end{tabular}\\[1mm]
\caption{\small Absolute errors and rate of convergence
for $\cS_\k g(0.2,0,0)$ using \eqref{finapp}.}\label{T2} 
\end{center}
\end{footnotesize}
\end{table} 

\begin{table}
\begin{footnotesize}
\begin{center}
\begin{tabular}{r|cc|cc|cc} \hline
\(n=10\)&\multicolumn{6}{c}{\(\k^2=1\)}\\ \hline
 & \multicolumn{2}{c|}{\(M=1\)}& \multicolumn{2}{c|}{\(M=2\)}& \multicolumn{2}{c}{\(M=3\)}   \\\hline
$h^{-1}$ &  abs. error & rate  &abs. error & rate& abs. error & rate  \\[1pt] \hline       
10& 0.256E+00 &  
  &    0.515E-03 & 
  & 0.583E-03 &  
  \\
 20&0.693E-01 &1.88 &  0.887E-05& 5.86 & 0.885E-05  &  6.04  \\
40 &  0.177E-01  &1.97  &    0.204E-06 & 5.44   &0.135E-06 &    6.03   \\
80 & 0.444E-02 &1.99&   0.107E-07 &  4.25 & 0.397E-08 & 5.09   \\
\hline 
\end{tabular}\\[1mm]

\begin{tabular}{r|cc|cc|cc} \hline
&\multicolumn{6}{c}{\(\k^2=10\)}\\ \hline
 & \multicolumn{2}{c|}{\(M=1\)}& \multicolumn{2}{c|}{\(M=2\)}& \multicolumn{2}{c}{\(M=3\)}   \\\hline
$h^{-1}$ &  abs. error & rate  &abs. error & rate& abs. error & rate  \\[1pt] \hline      
10&   0.301E+00& 
&   0.264E-02 & 
 & 0.583E-03&    
 \\
 20& 0.813E-01 &1.89 & 0.181E-03 &3.86    & 0.135E-06 &  6.03  \\
40 &0.207E-01 & 1.97& 0.116E-04& 3.97&  0.501E-07&  6.03  \\
80 &  0.521E-02&1.99   & 0.723E-06 &4.00   &0.397E-08  &   5.09   \\
\end{tabular}\\[1mm]
\begin{tabular}{r|cc|cc|cc} \hline
&\multicolumn{6}{c}{\(\k^2=100\)}\\ \hline
 & \multicolumn{2}{c|}{\(M=1\)}& \multicolumn{2}{c|}{\(M=2\)}& \multicolumn{2}{c}{\(M=3\)}   \\\hline
$h^{-1}$ &  abs. error & rate  &abs. error & rate& abs. error & rate  \\[1pt] \hline       
10& 0.811E+00  && 0.719E-01& 
&   0.255E-02 & 
\\
 20&  0.256E+00 & 1.67   & 0.440E-02& 4.03  &0.391E-04&  6.03  \\
40 &  0.675E-01&1.92   & 0.273E-03& 4.01  &   0.605E-06& 6.01   \\
80 &  0.171E-01&1.98   & 0.171E-04 &4.00  & 0.705E-08 &6.42   \\
\hline 
\end{tabular}\\[1mm]
\caption{\small Absolute errors and rate of convergence
for $\cS_\k g(0.2,0,...,0)$ using \eqref{finapp}.}\label{T3} 
\end{center}
\end{footnotesize}
\end{table}

\begin{table}[h]
\begin{footnotesize}
\begin{center}
\begin{tabular}{r|cc|cc|cc} \hline
\(n=100\)&\multicolumn{6}{c}{\(\k^2=1\)}\\ \hline
 & \multicolumn{2}{c|}{\(M=1\)}& \multicolumn{2}{c|}{\(M=2\)}& \multicolumn{2}{c}{\(M=3\)}   \\\hline
$h^{-1}$ &  abs. error & rate  &abs. error & rate& abs. error & rate  \\[1pt] \hline       
10&  0.101E+01 && 0.864E-02 &   
&  0.591E-02 &   
\\
 20& 0.487E+00  & 1.05   & 0.314E-03 &4.78    &  0.895E-04&  6.05  \\
40 &0.148E+00&  1.71 & 0.161E-04&4.28  &  0.136E-05 & 6.04 \\
80 & 0.391E-01 &   1.93& 0.989E-06 &4.02&   0.422E-07&  5.01\\
\end{tabular}\\[1mm]

\begin{tabular}{r|cc|cc|cc} \hline
&\multicolumn{6}{c}{\(\k^2=10\)}\\ \hline
 & \multicolumn{2}{c|}{\(M=1\)}& \multicolumn{2}{c|}{\(M=2\)}& \multicolumn{2}{c}{\(M=3\)}   \\\hline
$h^{-1}$ &  abs. error & rate  &abs. error & rate& abs. error & rate  \\[1pt] \hline       
10& 0.101E+01 &  &   0.864E-02  &     
& 0.591E-02&  
\\
 20& 0.487E+00 & 1.05  & 0.314E-03 & 4.78    & 0.895E-04&  6.05  \\
40 &0.148E+00 &  1.71   &  0.161E-04&    4.28   &  0.136E-05&  6.04   \\
80 &0.391E-01&  1.93   &  0.989E-06 &    4.02   & 0.422E-07  &  5.01  \\
\hline 
\end{tabular}\\[1mm]
\begin{tabular}{r|cc|cc|cc} \hline
&\multicolumn{6}{c}{\(\k^2=100\)}\\ \hline
 & \multicolumn{2}{c|}{\(M=1\)}& \multicolumn{2}{c|}{\(M=2\)}& \multicolumn{2}{c}{\(M=3\)}   \\\hline
$h^{-1}$ &  abs. error & rate  &abs. error & rate& abs. error & rate  \\[1pt] \hline       
10&0.101E+01  & &0.864E-02 & 
&  0.591E-02 &   
\\
 20&0.487E+00 &    1.05   &    0.314E-03& 4.78 & 0.895E-04& 6.05
 \\
40 & 0.148E+00 &  1.71  &   0.161E-04  &4.28   &0.136E-05&   6.04 \\
80 &   0.391E-01& 1.93 &  0.989E-06 & 4.02  & 0.422E-07&  5.01    \\
\hline 
\end{tabular}\\[1mm]
\caption{\small Absolute errors and rate of convergence
for $\cS_\k g(0.2,0,...,0)$ using \eqref{finapp}.}\label{T4} 
\end{center}
\end{footnotesize}
\end{table}

\end{document}